\newtheorem{prop}{Proposition}[section]
\newtheorem{lemma}[prop]{Lemma}
\newtheorem{theorem}[prop]{Theorem}
\newtheorem{corollary}[prop]{Corollary}
\theoremstyle{definition}
\newtheorem{definition}[prop]{Definition}
\newtheorem{remark}[prop]{Remark}
\newtheorem{example}[prop]{Example}
\newcommand{\N}{\mathbb{N}}
\newcommand{\R}{\mathbb{R}}
\newcommand{\PP}{\mathcal{P}}
\newcommand{\PART}{\textnormal{Partition}}
\newcommand{\RANK}{\textnormal{rank}}
\newcommand{\seqnum}[1]{\href{https://oeis.org/#1}{\rm \underline{#1}}}
\begin{document}

\title{The minimal sum of squares over partitions with a nonnegative rank}

\author{Sela Fried\thanks{A postdoctoral fellow in the Department of Computer Science at the Ben-Gurion University of the Negev, Israel.} %Research Supported by the Israel Science Foundation (ISF) through grant No. 1456/18 and European Research Council Grant number: 949707.}
\\ \href{mailto:friedsela@gmail.com}{friedsela@gmail.com}}
\date{} 
\maketitle

\begin{abstract}
Motivated by a question of Defant and Propp (2020) regarding the connection between the degrees of noninvertibility of functions and those of their iterates, we address the combinatorial optimization problem of minimizing the sum of squares over partitions of $n$ with a nonnegative rank. Denoting the sequence of the minima by $(m_n)_{n\in\mathbb{N}}$, we prove that $m_n=\Theta\left(n^{4/3}\right)$. Consequently, we improve by a factor of $2$ the lower bound provided by Defant and Propp for iterates of order two.
\end{abstract} 

\section{Introduction}

Recently, \cite{DP} defined the \emph{degree of noninvertibility} of a function $f\colon X\to Y$ between two finite nonempty sets $X$ and $Y$ by $$\deg(f)=\frac{1}{|X|}\sum_{x\in X}\left|f^{-1}(f(x))\right|,$$ as a measure of how far $f$ is from being injective. Interested mainly in endofunctions (also called dynamical systems within the field of dynamical algebraic combinatorics), that is, functions $f\colon X\to X$, they then computed the degrees of noninvertibility of several specific functions and studied, from an extremal point of view, the connection between the degrees of noninvertibility of functions and those of their iterates. They concluded their work with the following question: Let $2\leq k\in\N$. Does the limit \begin{equation}\label{eq;44}
\lim_{n\to\infty}\max_{\substack{f\colon X\to X\\ |X|=n}}\frac{\deg\left(f^k\right)}{\deg(f)^{2-1/2^{k-1}}}\frac{1}{n^{1-1/2^{k-1}}}\end{equation} exist? If so, what is its value? They remarked that even answering the question for $k=2$ would be interesting and stated that it follows from their results that, if the limit exists, then it lies in the interval between $3^{-3/2}\approx0.19245$ and $1$. 

Our attempts to answer their question in the case $k=2$ have led us to a combinatorial optimization problem that seems not to have been addressed before, namely, the problem of finding the minimal sum of squares over partitions with a nonnegative rank. In this work, we address this problem and, consequently, improve the lower bound of the interval $\left[3^{-3/2}, 1\right]$ by a factor of $2$. 

We begin by stating our main results. The definitions of the terms that we use and the proofs of all the statements are given in Section \ref{43}. 

\section{Main results}

Let $X$ be a set of size $n\in\N$ to be used throughout this work. We denote by $\N_0$ the set of all nonnegative integers. Taking $k=2$ in (\ref{eq;44}), we wish to lower bound 
\begin{equation}\label{eq;001} 
\max_{f\colon X\to X}\frac{\deg\left(f^2\right)}{\deg(f)^{3/2}}\frac{1}{n^{1/2}},
\end{equation} where  $f^2$ stands for the composition $f\circ f$. Our approach is based on the fact that the functions with the largest possible degree of noninvertibility, namely $n$, are the constant functions (cf.\ \cite[p.\ 2]{DP}). Thus, we wish to solve the following combinatorial optimization problem:

\begin{align}
\text{minimize } & \deg(f)\label{eq;6522}\\
\text{where } & f\colon X\to X  \textnormal{ is such that } f^2 \textnormal{ is constant}.\nonumber
\end{align} 

The notion of the degree of noninvertibility of a function $f\colon X\to X$ is directly related to the sum of squares over a certain partition of $n$ via the observation that
$$
\deg(f)=\frac{1}{n}\sum_{x\in X}\left|f^{-1}(x)\right|^2
$$ (cf.\ \cite[p.\ 2]{DP}). Indeed, if $X=\{1,\ldots,n\}$ then $\left|f^{-1}(1)\right|,\ldots,\left|f^{-1}(n)\right|$ yield, upon reordering and omitting zeros, a partition of $n$ that we denote by $\PART(f)$. Conversely, it is clear that every partition $\lambda$ of $n$ induces a function $f\colon X\to X$ such that $\PART(f)=\lambda$.

It turns out (cf.\ Lemma \ref{lem;732}), that if $f\colon X\to X$ is such that $f^2$ is constant, then $\PART(f)$ has a nonnegative rank (cf.\ Definition \ref{d;01}). 
Denoting the set of all partitions of $n$ by $\PP(n)$ and the Euclidean norm of a vector $x$ by $||x||_2$, we may rewrite problem (\ref{eq;6522}) equivalently as

\begin{align}
    \text{minimize } & ||\lambda||_2^2 \label{eq;364}\\
    \text{where } & \lambda\in\PP(n) \textnormal{ is such that } \RANK(\lambda)\geq 0. \nonumber 
\end{align} 

\begin{remark}
Notice that, in general, a minimizer of (\ref{eq;364}) is not unique. For example, both $(5,3, 3, 3, 3)$ and $(6,3, 2, 2, 2, 2)$ minimize (\ref{eq;364}) for $n=17$.
\end{remark}

\iffalse
\begin{example}
A minimizer of (\ref{eq;364}) is, in general, not unique. For instance, both \begin{align}
&(17, 6, 6, 6, 5, 5, 5, 5, 5, 5, 5, 5, 5, 5, 5, 5, 5)\textnormal{ and}\nonumber\\
&(18, 5, 5, 5, 5, 5, 5, 5, 5, 5, 5, 5, 5, 5, 5, 4, 4, 4) \nonumber\end{align} minimize (\ref{eq;364}) for $n=100$. Figure \ref{fig:M1} shows the Young diagram of the first of the two partitions.
\end{example}
\fi

Our first main result is the observation that, for $n\neq 2$, the partitions of $n$ that minimize (\ref{eq;364}) must have a certain structure, namely, their largest part $\lambda_1$ is equal to their number of parts (i.e., their rank is $0$) and $n-\lambda_1$ is divided as evenly as possible among the remaining $\lambda_1 -1$ parts (see Figure \ref{fig:M1} for a visualization):

\begin{theorem}\label{thm;2}
For $n\geq 2$, problem (\ref{eq;364}) is equivalent to the following problem:
 \begin{align}
\textnormal{minimize } & x^{2}+r(a+1)^{2}+(x-1-r)a^{2}\label{eq;65}\\
\textnormal{such that } & x\in\{2,\ldots, n\}, \nonumber\\& n-x = a(x-1) + r \textnormal{ where } a\in\N_0 \textnormal{ and } 0\leq r<x-1\textnormal{ and}\nonumber\\ &x\geq \begin{cases}a,&r = 0;\\a+1,&\textnormal{otherwise}.\end{cases}\nonumber
\end{align} 
\end{theorem}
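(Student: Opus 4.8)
The plan is to establish the equivalence by showing that the two problems have the same optimal value, via matching inequalities, after reparametrizing feasible partitions by their largest part. Write $\ell(\lambda)$ for the number of parts of $\lambda$, so that, by Definition \ref{d;01}, the condition $\RANK(\lambda)\geq 0$ is the same as $\ell(\lambda)\leq \lambda_1$. Fixing the value $x:=\lambda_1$, a feasible $\lambda$ for (\ref{eq;364}) consists of this part, contributing $x^2$, together with at most $x-1$ further parts, each lying in $\{1,\ldots,x\}$ and summing to $n-x$; thus $||\lambda||_2^2=x^2+\sum_{i\geq 2}\lambda_i^2$, and I would first minimize the second summand for each admissible $x\in\{2,\ldots,n\}$ and only then minimize over $x$.

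The combinatorial heart is the elementary fact that, for a fixed sum $S$ and a fixed number $p$ of positive parts, $\sum_i y_i^2$ is minimized exactly by the \emph{balanced} choice in which the parts differ pairwise by at most $1$; if $S=ap+r$ with $a\in\N_0$ and $0\leq r<p$, this minimum equals $r(a+1)^2+(p-r)a^2$. A one-line exchange argument (if two parts differ by at least $2$, moving a unit from the larger to the smaller strictly decreases the sum of squares) proves both this and the monotonicity I need: splitting off a new part of size $1$ shows the minimum is non-increasing in $p$, so allowing more parts never hurts. Consequently, for fixed $x$ the remaining parts should be taken as numerous as the rank constraint permits, namely $p=x-1$, and balanced; writing $n-x=a(x-1)+r$ this reproduces exactly the objective $x^2+r(a+1)^2+(x-1-r)a^2$ of (\ref{eq;65}). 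The degenerate case $n-x<x-1$ is automatically subsumed: then $a=0$, the balanced configuration places $r=n-x$ parts of size $1$ and leaves the remaining slots empty, and the formula still records $||\lambda||_2^2=x^2+(n-x)$.

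It remains to match the feasibility constraints, and this is the step I expect to require the most care. The balanced configuration into $x-1$ slots not only minimizes the sum of squares but also minimizes the largest resulting part, whose value is $a+1$ if $r>0$ and $a$ if $r=0$. Hence a feasible partition of $n$ with largest part exactly $x$ exists if and only if this minimal largest part is at most $x$, which is precisely the condition $x\geq a+1$ (for $r>0$) and $x\geq a$ (for $r=0$) appearing in (\ref{eq;65}); when it fails, no distribution of $n-x$ into at most $x-1$ parts bounded by $x$ exists, so the value $x$ is infeasible on both sides. This double role of the balanced configuration --- simultaneously the sum-of-squares minimizer and the minimizer of the largest part --- is what makes optimality and feasibility align, and it is the point to argue carefully, together with the boundary values $x=1$ (which forces the infeasible $(1^n)$ for $n\geq 2$) and $x=n$. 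Assembling these observations gives, for each admissible $x$, equality between the minimal value of (\ref{eq;364}) restricted to $\lambda_1=x$ and the corresponding value of (\ref{eq;65}); minimizing over $x$ then yields the asserted equivalence.
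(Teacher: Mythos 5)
Your proposal is correct, but it is organized genuinely differently from the paper's proof. The paper factors the argument through two lemmas: Lemma \ref{lem;15}, a rank-descent argument showing (for $n\neq 2$, with the small cases $n=1,3,4$ treated separately) that any feasible partition of positive rank can be strictly improved in sum of squares while lowering the rank, so that minimizers of (\ref{eq;364}) are balanced; and Lemma \ref{lem;16}, the unit-exchange inequality, which then equalizes the remaining $x-1$ parts. You instead stratify the feasible set of (\ref{eq;364}) by the largest part $x$, solve each stratum exactly via the classical fact that the balanced distribution minimizes the sum of squares for a fixed sum and number of parts, and use the splitting inequality ($y^2>(y-1)^2+1$ for $y\geq 2$) to show the stratum minimum is non-increasing in the number of slots, so that the maximal allowance $x-1$ is optimal; the constraints $x\geq a$ (if $r=0$) and $x\geq a+1$ (otherwise) of (\ref{eq;65}) are then matched by your observation that the balanced configuration simultaneously minimizes the maximum part, namely $\left\lceil (n-x)/(x-1)\right\rceil$, so that a value of $x$ violating the constraint is infeasible on both sides. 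The shared ingredient is the exchange inequality (your one-line exchange is exactly Lemma \ref{lem;16}); the difference is that the paper normalizes the rank to zero globally before equalizing, whereas you never need the statement that minimizers are balanced. Your route yields a per-$x$ equality of optimal values (slightly stronger than equality of the global minima), treats the degenerate strata with $a=0$, and hence $n=2$, uniformly (Lemma \ref{lem;15} must exclude $n=2$), and avoids that lemma's case analysis; the paper's route yields the structural fact that for $n\neq 2$ optimal partitions are balanced, which is of independent interest and is emphasized in the paper's narrative.
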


\begin{figure}
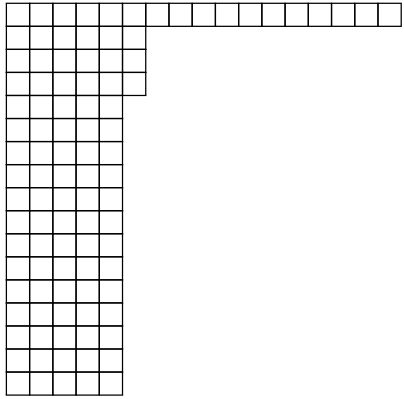

\centering
\ydiagram{17, 6, 6, 6, 5, 5, 5, 5, 5, 5, 5, 5, 5, 5, 5, 5, 5}
\caption{The Young diagram of one of the two minimizers of (\ref{eq;364}), for $n=100$.}\label{fig:M1}
\end{figure}

Let $m_n$ denote the minimum of (\ref{eq;65}). Then $$(m_n)_{n\in\N}=1, 4, 5, 8, 11, 14, 17, 22, 25,\;\ldots$$ (the sequence is registered as \seqnum{A353044} in the OEIS. See Table \ref{table:22} for its first $210$ values). Lemmas \ref{lem;660} and \ref{lem;661}, respectively, show that $(m_n)_{n\in\N}$ is strictly increasing and that its elements have alternating parity. While we do not have an exact formula for $(m_n)_{n\in\N}$, we obtain lower and upper bounds by applying continuous relaxation:

\begin{theorem}\label{th;211}
We have $m_n=\Theta(n^{4/3})$. More precisely, $$\frac{n^{4/3}}{4}\leq m_n\leq(2^{-2/3}+2^{1/3})n^{4/3},$$ for $n\geq 28$.
\end{theorem}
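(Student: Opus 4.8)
The plan is to prove both bounds by continuous relaxation, exploiting the fact that, for a fixed largest part $x$, the objective $\Phi(x)$ of (\ref{eq;65}) equals $x^2$ plus the sum of squares of $x-1$ nonnegative integers summing to $n-x$ distributed as evenly as possible. Writing $S=n-x=a(x-1)+r$, a direct computation gives
$$r(a+1)^2+(x-1-r)a^2=\frac{(n-x)^2}{x-1}+\frac{r(x-1-r)}{x-1},$$
so that $x^2+\frac{(n-x)^2}{x-1}\leq\Phi(x)\leq x^2+\frac{(n-x)^2}{x-1}+\frac{x-1}{4}$, the correction term being at most $(x-1)/4$. The continuous problem of minimizing the leading part $x^2+n^2/x$ is solved at $x=2^{-1/3}n^{2/3}$ with value $(2^{-2/3}+2^{1/3})n^{4/3}$, which simultaneously pins down the order $n^{4/3}$, the optimal scale $x\asymp n^{2/3}$, and the upper-bound constant.

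For the lower bound I would argue with no reference to the continuous optimum at all. For any feasible $x$ we have both $\Phi(x)\geq x^2$ and $\Phi(x)\geq\frac{(n-x)^2}{x-1}$. Splitting at the threshold $x=n^{2/3}/2$: if $x\geq n^{2/3}/2$, then $\Phi(x)\geq x^2\geq n^{4/3}/4$; if $x<n^{2/3}/2$, then (since $n^{2/3}/2\leq n/2$ for $n\geq 1$) we have $n-x>n/2$ and $x-1<n^{2/3}/2$, whence $\Phi(x)\geq\frac{(n-x)^2}{x-1}>\frac{(n/2)^2}{n^{2/3}/2}=n^{4/3}/2>n^{4/3}/4$. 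In either case $m_n\geq n^{4/3}/4$.

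For the upper bound I would exhibit the single feasible point $x=\lceil 2^{-1/3}n^{2/3}\rceil$. Feasibility amounts to $x\in\{2,\ldots,n\}$ together with $x\geq a+1$ (or $x\geq a$ when $r=0$), where $a=\lfloor (n-x)/(x-1)\rfloor$ is of order $n^{1/3}$; since $x$, of order $n^{2/3}$, dominates $a$, this holds once $n$ is past a small explicit threshold. To bound $\Phi(x)$ I substitute $u=x-1$ and rewrite $x^2+\frac{(n-x)^2}{x-1}+\frac{x-1}{4}=u^2+\frac{(n-1)^2}{u}+\frac{13u}{4}+1-2(n-1)$. Using convexity of $u\mapsto u^2+(n-1)^2/u$ near its minimizer $2^{-1/3}(n-1)^{2/3}$ gives $u^2+(n-1)^2/u\leq(2^{-2/3}+2^{1/3})(n-1)^{4/3}+O(1)\leq(2^{-2/3}+2^{1/3})n^{4/3}+O(1)$, while the remaining terms sum to $-2n+O(n^{2/3})$. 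Hence $\Phi(x)\leq(2^{-2/3}+2^{1/3})n^{4/3}$ for all sufficiently large $n$.

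The main obstacle is matching the upper-bound constant exactly rather than only up to lower-order terms. What saves the argument is the gap between the true objective $x^2+\frac{(n-x)^2}{x-1}$ and the heuristic $x^2+n^2/x$: expanding $(n-x)^2$ produces a genuine $\Theta(n)$ deficit (the $-2(n-1)$ term above), and this deficit dominates all the $O(n^{2/3})$ errors arising from rounding $x$ to an integer, from the even-distribution correction $\frac{x-1}{4}$, and from replacing $(n-1)^{4/3}$ by $n^{4/3}$. Pinning down the precise crossover where $-2n+O(n^{2/3})\leq 0$ is what forces an explicit threshold; the claimed range $n\geq 28$ would be secured by a careful but routine accounting of these constants, with the finitely many smaller cases checked directly against the tabulated values of $(m_n)_{n\in\N}$.
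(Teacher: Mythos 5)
Your proposal is correct, and it takes a genuinely lighter route than the paper on both bounds. The sandwich $l_n(x)\leq \Phi(x)\leq u_n(x)$ with $l_n(x)=x^2+\frac{(n-x)^2}{x-1}$ and correction at most $\frac{x-1}{4}$ is exactly the paper's Lemma \ref{lem;52}, down to the same identity $r(a+1)^2+(x-1-r)a^2-\frac{(n-x)^2}{x-1}=\frac{r(x-1-r)}{x-1}$. After that you diverge. For the lower bound the paper minimizes $l_n$ exactly: it solves $l_n'(x)=0$ via Cardano's formula, obtaining $y_0^{(n)}=\frac{D_n+1+1/D_n}{2}$, and estimates $l_n\bigl(y_0^{(n)}\bigr)\geq \frac{1}{4D_n^2}\geq \frac{n^{4/3}}{4}$; your threshold split at $x=n^{2/3}/2$ (using $\Phi(x)\geq x^2$ on one side and $\Phi(x)\geq\frac{(n-x)^2}{x-1}>\frac{(n/2)^2}{n^{2/3}/2}=\frac{n^{4/3}}{2}$ on the other) is completely elementary, needs no calculus, and in fact yields $m_n\geq \frac{n^{4/3}}{4}$ for all $n\geq 2$, a wider range than stated. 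For the upper bound the paper again computes the exact minimizer $x_0^{(n)}$ of $u_n$ in closed form, proves $C_n\leq 1$ for $n\geq 28$ (this is where the theorem's threshold comes from), and evaluates $u_n$ at $\frac32+2^{-1/3}n^{2/3}$; you instead plug in the explicit test point $x=\lceil 2^{-1/3}n^{2/3}\rceil$ and make the constant work via the substitution $u=x-1$, which exposes the $-2(n-1)$ slack that absorbs all $O(n^{2/3})$ errors --- a correct and arguably more transparent mechanism (your appeal to ``convexity'' is really a bounded-second-derivative/Taylor estimate near $u^*=2^{-1/3}(n-1)^{2/3}$, but that is routine since $g''\approx 6$ there, and $|u-u^*|=O(1)$). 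You also verify feasibility $x\geq a+1$ explicitly, which the paper glosses over. Two caveats, both minor: you defer the explicit accounting that secures $n\geq 28$, backstopping small $n$ with Table \ref{table:22} --- acceptable, and comparable in rigor to the paper's own unproved ``holds for $n\geq 5$'' step --- and note that what the paper's heavier Cardano machinery buys is the exact formulas for $x_0^{(n)}$ and $y_0^{(n)}$, which are reused essentially in the proofs of Theorems \ref{T;6} and \ref{thm;456}; your self-contained argument proves Theorem \ref{th;211} but would not supply those later ingredients.
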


Theorem \ref{th;211} allows us to improve by a factor of $2$ the lower bound given by \cite[p.\ 17]{DP}:

\begin{corollary}\label{cor;222}
Taking $k=2$, the limit in (\ref{eq;44}), if it exists, is lower bounded by $2\cdot3^{-3/2}$.
\end{corollary}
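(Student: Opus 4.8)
The plan is to lower bound the maximum in (\ref{eq;001}) by restricting attention to the subfamily of functions $f\colon X\to X$ for which $f^2$ is constant, and then to feed in the upper bound of Theorem \ref{th;211}. The point of this restriction is that a constant function attains the largest possible degree of noninvertibility, namely $n$ (cf.\ \cite[p.\ 2]{DP}); hence $\deg(f^2)=n$ for every such $f$. Substituting this into the expression in (\ref{eq;001}) collapses it to $n^{1/2}/\deg(f)^{3/2}$, so that maximizing over the subfamily amounts precisely to \emph{minimizing} $\deg(f)$ subject to $f^2$ being constant.

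That minimization is exactly problem (\ref{eq;6522}), which we have already argued to be equivalent to problem (\ref{eq;364}). Using the identity $\deg(f)=\frac{1}{n}||\PART(f)||_2^2$ together with Theorem \ref{thm;2}, the minimal value of $\deg(f)$ over the subfamily equals $m_n/n$, where $m_n$ is the minimum of (\ref{eq;65}); in particular some $f$ with $f^2$ constant realizes it. Feeding this back, I would record
$$\max_{f\colon X\to X}\frac{\deg(f^2)}{\deg(f)^{3/2}}\frac{1}{n^{1/2}}\;\geq\;\frac{n^{1/2}}{(m_n/n)^{3/2}}\;=\;\frac{n^{2}}{m_n^{3/2}},$$
the displayed inequality holding because the unrestricted maximum dominates the maximum over functions with $f^2$ constant.

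It then remains to insert the upper bound $m_n\leq(2^{-2/3}+2^{1/3})n^{4/3}$ from Theorem \ref{th;211}, valid for $n\geq 28$. The one computation worth isolating is the simplification $2^{-2/3}+2^{1/3}=2^{-2/3}(1+2)=3\cdot 2^{-2/3}$, whence $(2^{-2/3}+2^{1/3})^{3/2}=3^{3/2}\cdot 2^{-1}$. Consequently $m_n^{3/2}\leq \tfrac{1}{2}\,3^{3/2}n^{2}$, and therefore $n^{2}/m_n^{3/2}\geq 2\cdot 3^{-3/2}$ for every $n\geq 28$. Since this lower bound on the quantity in (\ref{eq;001}) is a constant independent of $n$, passing to the limit (under the hypothesis that it exists) preserves the inequality and yields the claimed value $2\cdot 3^{-3/2}$.

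As for where the difficulty lies: essentially all of the real work has been carried out in Theorem \ref{th;211}, and only its \emph{upper} bound on $m_n$ is needed here. The remaining steps are bookkeeping, and the only place demanding any care is the algebraic collapse $2^{-2/3}+2^{1/3}=3\cdot 2^{-2/3}$, which is exactly what converts the somewhat opaque constant of Theorem \ref{th;211} into a clean factor-of-$2$ improvement over the bound $3^{-3/2}$ of \cite{DP}.
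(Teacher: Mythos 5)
Your proposal is correct and follows essentially the same route as the paper: both restrict to functions with $f^2$ constant (so that $\deg(f^2)=n$), identify the optimal such $f$ via $\deg(f)=m_n/n$, insert the upper bound $m_n\leq(2^{-2/3}+2^{1/3})n^{4/3}$ from Theorem \ref{th;211}, and simplify using $2^{-2/3}+2^{1/3}=3\cdot 2^{-2/3}$ to obtain the constant $2\cdot 3^{-3/2}$. Your write-up is in fact slightly more explicit than the paper's, which leaves the steps $\deg(f^2)=n$ (via Lemma \ref{lem;732}) and the final algebraic simplification implicit.
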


We proceed by showing that, if $t_n$ is a minimizer of (\ref{eq;65}), then it must lie in a certain interval:

\begin{theorem}\label{T;6}
Suppose that $n\geq 6$. There exists a function $u_n\colon(1,\infty)\to\R$ such that if $x_0^{(n)}$ is the global minimum of $u_n$ and $x_1^{(n)} < x_2^{(n)}$ are the two real positive roots of the cubic polynomial $$
x^{3}+\left(-2n-u_n\left(\left\lfloor x_0^{(n)}\right\rfloor\right)\right)x+n^{2}+u_n\left(\left\lfloor x_0^{(n)}\right\rfloor\right),$$ then $t_n\in \left\{\left\lceil x_1^{(n)} \right\rceil, \ldots, \left\lfloor x_2^{(n)} \right\rfloor\right\}$, for every $t_n$ that minimizes (\ref{eq;65}). 
\end{theorem}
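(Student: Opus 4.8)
The plan is to realize the objective of (\ref{eq;65}) as the restriction to the integers of a function $u_n$ on $(1,\infty)$, to bound the optimum $m_n$ from above by evaluating $u_n$ at the floor of its minimizer, and to trap every integer minimizer inside the sublevel set that this bound cuts out of a smooth convex relaxation. Concretely, for real $x\in(1,\infty)$ I would set $a=\lfloor(n-x)/(x-1)\rfloor$ and $r=(n-x)-a(x-1)\in[0,x-1)$ and define
\[ u_n(x)=x^{2}+r(a+1)^{2}+(x-1-r)a^{2}, \]
so that at every feasible integer $x$ the value $u_n(x)$ is exactly the objective of (\ref{eq;65}); since $u_n\to\infty$ as $x\to1^{+}$ and grows with $x$, it attains a global minimum at some $x_0^{(n)}\in(1,\infty)$.

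Alongside $u_n$ I would use the relaxation obtained by splitting $n-x$ perfectly evenly,
\[ F_n(x)=x^{2}+\frac{(n-x)^{2}}{x-1}, \]
and establish two elementary facts. First, a short computation gives $u_n(x)-F_n(x)=r(x-1-r)/(x-1)\ge0$, expressing that the as-even-as-possible integer split never beats the exact even split. Second, rewriting $F_n(x)=x^{2}+x+(n-1)^{2}/(x-1)-2n+1$ yields $F_n''(x)=2+2(n-1)^{2}/(x-1)^{3}>0$, so $F_n$ is strictly convex on $(1,\infty)$, tends to $\infty$ at both ends, and therefore has a unique minimizer and interval-shaped sublevel sets.

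With $U:=u_n(\lfloor x_0^{(n)}\rfloor)$, the heart of the argument is the chain
\[ F_n(t_n)\le u_n(t_n)=m_n\le U, \]
valid for any minimizer $t_n$ of (\ref{eq;65}): the first inequality is the nonnegativity of $u_n-F_n$, the middle equality holds because $t_n$ is a feasible integer, and the last inequality holds because $\lfloor x_0^{(n)}\rfloor$ is itself a feasible integer, whence $U=u_n(\lfloor x_0^{(n)}\rfloor)\ge m_n$. Thus every minimizer lies in the sublevel set $\{x\in(1,\infty):F_n(x)\le U\}$, which by strict convexity is a closed interval $[x_1^{(n)},x_2^{(n)}]$ with endpoints solving $F_n(x)=U$. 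Clearing the denominator turns $F_n(x)=U$ into
\[ x^{3}+(-2n-U)x+n^{2}+U=0, \]
exactly the stated cubic; as the sum of its three roots is $0$ and two of them, $x_1^{(n)}<x_2^{(n)}$, exceed $1$, the remaining root equals $-(x_1^{(n)}+x_2^{(n)})<0$, so $x_1^{(n)},x_2^{(n)}$ are its only positive roots. Since $t_n$ is an integer in $[x_1^{(n)},x_2^{(n)}]$, it lies in $\{\lceil x_1^{(n)}\rceil,\ldots,\lfloor x_2^{(n)}\rfloor\}$.

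The convex sublevel-set analysis is routine once $F_n''>0$ is in hand, so the hard part will be the feasibility bookkeeping: I must verify that $\lfloor x_0^{(n)}\rfloor$ actually satisfies the constraints of (\ref{eq;65}) — that it lies in $\{2,\ldots,n\}$ and meets the rank condition — so that $U\ge m_n$ really holds, and that $U$ strictly exceeds $\min F_n$ so that the cubic genuinely has two distinct positive roots and the conclusion is non-vacuous. This is where the hypothesis $n\ge6$ is spent: using the $\Theta(n^{4/3})$ estimates behind Theorem \ref{th;211} one locates $x_0^{(n)}$ near $2^{-1/3}n^{2/3}$, so that the associated $a$ is of order $n^{1/3}$ and comfortably smaller than $x$, which secures feasibility for all sufficiently large $n$ and leaves only finitely many small cases to check.
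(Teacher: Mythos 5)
Your proposal is correct in substance and reproduces the paper's architecture: sandwich the objective of (\ref{eq;65}) between the smooth lower envelope $l_n(x)=x^2+\frac{(n-x)^2}{x-1}$ (your $F_n$) and an upper bound for $m_n$ obtained by evaluating at the floor of a continuous minimizer, then trap every integer minimizer in the sublevel set $\{x\in(1,\infty): l_n(x)\le U\}$, whose endpoints become the roots of the stated cubic after clearing the denominator. Where you genuinely diverge is in the two ingredients. First, the paper's $u_n$ is not your exact piecewise extension of the objective but the smooth majorant $u_n(x)=x^2+\frac{(n-x)^2}{x-1}+\frac{x-1}{4}$; your identity $u_n(x)-F_n(x)=\frac{r(x-1-r)}{x-1}\le\frac{x-1}{4}$ is precisely the computation in the proof of Lemma \ref{lem;52}, which is how the paper relates the objective to both envelopes. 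Since the theorem only asserts that \emph{some} $u_n$ exists, your choice is legitimate (and possibly yields a tighter level $U$), but the paper's choice buys two things you must work for: its $x_0^{(n)}$ exists trivially, is unique, and has a Cardano closed form that is reused in Theorems \ref{th;211} and \ref{thm;456}, whereas your $u_n$ is only piecewise quadratic, so even the existence of $x_0^{(n)}$ (continuity across the breakpoints of $a=\lfloor (n-x)/(x-1)\rfloor$) and its location $\Theta\left(n^{2/3}\right)$ require the sandwich argument you only gesture at; and strict inequality $U>\min_{(1,\infty)} l_n$ is automatic for the paper since $u_n>l_n$ pointwise, whereas with your $u_n$ it can a priori degenerate when $r=0$ at $\left\lfloor x_0^{(n)}\right\rfloor$ and that integer coincides with the exact minimizer of $F_n$ --- you flag this but do not dispose of it (the conclusion survives the degenerate case with $x_1^{(n)}=x_2^{(n)}$, but the statement's ``two real positive roots'' does not). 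On the other hand, your convexity computation $F_n''(x)=2+\frac{2(n-1)^2}{(x-1)^3}>0$, making sublevel sets intervals and forcing the third cubic root to be $-\left(x_1^{(n)}+x_2^{(n)}\right)<0$ by Vi\`ete, is cleaner than the paper's route (intermediate value theorem on $(1,\infty)$ and on $(-\infty,1)$, plus the bound of three roots for a cubic, plus the unimodality of $l_n$ from the proof of Theorem \ref{th;211}).

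The one step you defer --- verifying that $\left\lfloor x_0^{(n)}\right\rfloor$ is feasible for (\ref{eq;65}), i.e.\ lies in $\{2,\ldots,n\}$ and satisfies the rank condition $x\ge a$ (resp.\ $a+1$) --- is indeed needed for the inequality $U\ge m_n$, and your sketch ($x_0^{(n)}\sim 2^{-1/3}n^{2/3}$, so $a\sim 2^{1/3}n^{1/3}\ll x$) is the right way to settle it for large $n$, with the small cases of $n\ge 6$ checked directly. Note, however, that the paper's own proof silently relies on the same feasibility when it implicitly uses $l_n(t_n)\le m_n\le u_n\left(\left\lfloor x_0^{(n)}\right\rfloor\right)$ via Lemma \ref{lem;52}, so this is a shared omission rather than a defect specific to your route; still, a complete write-up on your side should pin down the threshold beyond which the asymptotics apply and perform the finite check.
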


\begin{example}
In the notation of Theorem \ref{T;6}, we have $\left\lceil x_1^{(1000)}\right\rceil = 78, \left\lfloor x_2^{(1000)}\right\rfloor = 82$ and $t_{1000} = 78$ is the unique minimizer of (\ref{eq;65}). See Figure \ref{fig;eg} for a  visualization of Theorem \ref{T;6}.
\end{example}

\begin{figure}[H]
\includegraphics[width=10cm, height=8cm]{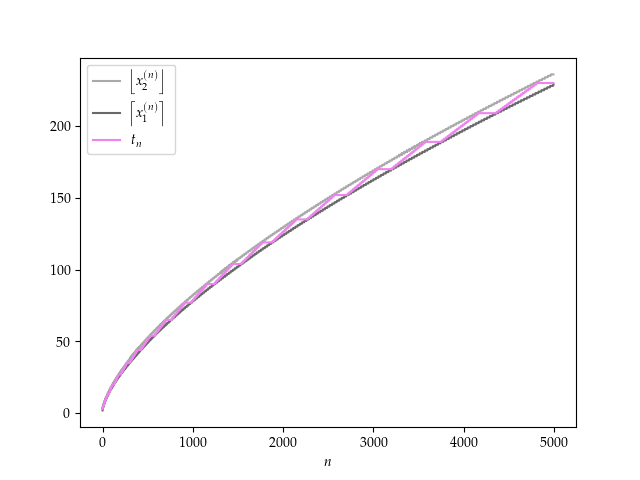}
\centering
\caption{The bounds on $t_n$ of Theorem \ref{T;6}, together with the smallest $t_n$ that minimizes (\ref{eq;65}), for $6\leq n\leq 5000$.}
\label{fig;eg}
\end{figure}

Finally, we establish the asymptotic behaviour of the minimizers of (\ref{eq;65}): 
\begin{theorem}\label{thm;456}
Let $(t_n)_{n\in\N}$ be any sequence such that $t_n$ minimizes (\ref{eq;65}). Then $t_n=\Theta\left(n^{2/3}\right)$.
\end{theorem}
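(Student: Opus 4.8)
The plan is to obtain matching upper and lower bounds on $t_n$ directly from the estimate $m_n=\Theta(n^{4/3})$ established in Theorem \ref{th;211}, by bounding the objective in (\ref{eq;65}) from below by two simpler expressions. Throughout, write $\Phi(x)$ for the objective $x^2+r(a+1)^2+(x-1-r)a^2$ evaluated at a feasible $x$ (with $a,r$ determined by $x$ as in the constraints), so that $\Phi(t_n)=m_n$, and set $C=2^{-2/3}+2^{1/3}$ for the constant appearing in the upper bound of Theorem \ref{th;211}.

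For the upper bound $t_n=O(n^{2/3})$, the key observation is that every term of $\Phi$ other than $x^2$ is nonnegative, so $\Phi(x)\ge x^2$ for every feasible $x$. Evaluating at the minimizer gives $t_n^2\le\Phi(t_n)=m_n\le Cn^{4/3}$, whence $t_n\le\sqrt{C}\,n^{2/3}$.

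For the lower bound $t_n=\Omega(n^{2/3})$, I would bound the contribution of the $x-1$ smaller parts from below. These parts are nonnegative integers summing to $n-x$, so by the Cauchy--Schwarz inequality (equivalently, the power-mean inequality) their sum of squares is at least $(n-x)^2/(x-1)$; consequently $\Phi(x)\ge x^2+(n-x)^2/(x-1)$ for every feasible $x$. Evaluating at $t_n$ and discarding the $t_n^2$ term yields $m_n\ge(n-t_n)^2/(t_n-1)$. Since the first step gives $t_n=O(n^{2/3})=o(n)$, for all sufficiently large $n$ we have $n-t_n\ge n/2$ and $t_n-1\le t_n$, so $m_n\ge n^2/(4t_n)$. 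Combining this with $m_n\le Cn^{4/3}$ gives $t_n\ge n^{2/3}/(4C)$. Together with the upper bound, this proves $t_n=\Theta(n^{2/3})$.

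There is no serious obstacle here: both directions follow from crude pointwise lower bounds on $\Phi$ together with the already-proven estimate $m_n=\Theta(n^{4/3})$. The only points that require a little care are checking that the ``as equal as possible'' integer distribution of the smaller parts realizes (and hence is bounded below by) the continuous minimum $(n-x)^2/(x-1)$, so that the inequality points in the right direction, and fixing the threshold on $n$ beyond which $n-t_n\ge n/2$ holds; the latter follows from the explicit constant $\sqrt{C}$ in the upper bound on $t_n$.
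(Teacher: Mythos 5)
Your proposal is correct, but it takes a genuinely different and much more elementary route than the paper. The paper deduces Theorem \ref{thm;456} from the localization in Theorem \ref{T;6}: both $t_n$ and $x_0^{(n)}$ lie in $\left\{\left\lceil x_1^{(n)}\right\rceil,\ldots,\left\lfloor x_2^{(n)}\right\rfloor\right\}$, and, writing the cubic's roots via Cardano's formula in trigonometric form and invoking the appendix estimate $\frac{p_n^3}{27}+\frac{q_n^2}{4}\prec n^4$, it shows $x_2^{(n)}-x_1^{(n)}\prec n^{2/3}$; since $x_0^{(n)}=\Theta\left(n^{2/3}\right)$, the claim follows. You instead use only two pointwise bounds on the objective --- $\Phi(x)\geq x^2$ (all other terms are nonnegative, since $0\leq r<x-1$ forces $x-1-r\geq 1$) and $\Phi(x)\geq x^2+\frac{(n-x)^2}{x-1}=l_n(x)$ --- combined with $m_n=\Theta\left(n^{4/3}\right)$ from Theorem \ref{th;211}. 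Note that your Cauchy--Schwarz step is exactly the lower bound of Lemma \ref{lem;52}, which you could simply cite. Both of the points you flag as needing care are fine: you only need the inequality in the direction Cauchy--Schwarz gives (not that the balanced integer distribution attains the continuous minimum), and $n-t_n\geq n/2$ follows from $t_n\leq\sqrt{C}\,n^{2/3}\leq n/2$, which holds for all $n\geq 21$, hence certainly in the range $n\geq 28$ where Theorem \ref{th;211} applies. As for what each approach buys: yours is a few lines, avoids Cardano's formula and the entire appendix computation, and yields explicit (if loose) constants $n^{2/3}/(4C)\leq t_n\leq\sqrt{C}\,n^{2/3}$; the paper's heavier machinery pins $t_n$ to a window of width $o\left(n^{2/3}\right)$ around $x_0^{(n)}$, which implicitly gives the sharper asymptotics $t_n/n^{2/3}\to 2^{-1/3}$, something your constants-with-a-gap argument cannot recover. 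If only the stated $\Theta\left(n^{2/3}\right)$ conclusion is wanted, your proof is a clean simplification.
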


\section{Definitions and proofs}\label{43}

We begin with the definition of the rank of a partition, a notion that goes back to \cite{D} (see also \seqnum{A064174} in the OEIS). The reader is referred to \cite{A} for the general theory of partitions.

\begin{definition}\label{d;01}
Let $\lambda\in\PP(n)$. The \emph{rank of $\lambda$}, denoted by $\RANK(\lambda)$, is defined as $\lambda$'s largest part minus its number of parts.
\end{definition}

Functions $f\colon X\to X$ such that $f^2$ is constant are characterized by induced partitions of $n$ with a nonnegative rank:

\begin{lemma}\label{lem;732}
Let $f\colon X\to X$ be a function such that $f^2$ is constant. Then $\RANK(\PART(f))\geq 0$. Conversely, for every $\lambda\in\PP(n)$ with $\RANK(\lambda)\geq 0$, there is a function $f\colon X\to X$ such that $f^2$ is constant and $\PART(f)=\lambda$. 
\end{lemma}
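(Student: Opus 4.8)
The plan is to establish the two directions separately, in both cases exploiting the rigid structure forced by the condition that $f^2$ be constant.

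For the forward direction, suppose $f^2(x)=c$ for every $x\in X$ and some fixed $c\in X$. First I would record the basic structural facts: since $c=f^2(x)=f(f(x))$ for every $x$, every element of the image $f(X)$ is sent by $f$ to $c$, that is, $f(f(X))=\{c\}$, so $f(X)\subseteq f^{-1}(c)$; and since $c\in f(X)$, this also forces $f(c)=c$. The key observation is then that the number of parts of $\PART(f)$ equals the number of nonempty fibers, which is exactly $|f(X)|$, whereas the largest part is at least $|f^{-1}(c)|$ because $f^{-1}(c)$ is one of the (nonempty) fibers. Writing $\PART(f)=(\lambda_1,\ldots,\lambda_\ell)$ and combining these facts with the inclusion $f(X)\subseteq f^{-1}(c)$ gives
$$
\lambda_1\;\geq\;|f^{-1}(c)|\;\geq\;|f(X)|\;=\;\ell,
$$
so that $\RANK(\PART(f))=\lambda_1-\ell\geq 0$.

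For the converse, given $\lambda=(\lambda_1,\ldots,\lambda_\ell)\in\PP(n)$ with $\RANK(\lambda)=\lambda_1-\ell\geq 0$, I would build $f$ explicitly. Partition $X$ into disjoint blocks $B_1,\ldots,B_\ell$ with $|B_i|=\lambda_i$ (possible since $\sum_i\lambda_i=n$). Because $|B_1|=\lambda_1\geq\ell$, I can choose $\ell$ distinct elements $p_1,\ldots,p_\ell$ all lying inside the single block $B_1$, and then define $f(x)=p_i$ whenever $x\in B_i$. By construction the nonempty fibers are exactly $f^{-1}(p_i)=B_i$, of sizes $\lambda_i$, so $\PART(f)=\lambda$; and since each $p_i$ lies in $B_1$, we have $f(p_i)=p_1$, whence $f^2(x)=f(p_i)=p_1$ for every $x\in B_i$, i.e.\ $f^2$ is the constant $p_1$.

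The only real subtlety---and the place where the hypothesis $\RANK(\lambda)\geq 0$ is genuinely needed---is ensuring that the block $B_1$ is large enough to accommodate all $\ell$ of the \emph{hub} points $p_1,\ldots,p_\ell$; this is precisely the inequality $\lambda_1\geq\ell$. Beyond that, the argument is bookkeeping: one must be careful that $\PART(f)$ is defined by reordering the fiber sizes and discarding zeros, so that identifying its number of parts with $|f(X)|$ and its largest part with the maximal fiber size is justified. I would finally check the degenerate case $\ell=1$ (the constant functions, $\lambda=(n)$) to confirm the construction specializes correctly.
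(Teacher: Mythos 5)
Your proposal is correct and follows essentially the same route as the paper: the forward direction via the chain $\lambda_1\geq\left|f^{-1}(c)\right|\geq\left|f(X)\right|=\ell$ coming from $f(X)\subseteq f^{-1}(c)$, and the converse via the same block construction (the paper takes $B_k$ to be consecutive integers in $\{1,\ldots,n\}$ and the hub points $p_k=k$, which all land in $B_1$ because $\lambda_1\geq\ell$). Your abstract phrasing and the explicit checks (distinctness of the $p_i$, the degenerate case $\ell=1$) are just a more careful write-up of the identical argument.
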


\begin{proof}
Assume that $f^2$ is constant. Then there is $y\in X$ such that $f(f(x))=y$ for every $x\in X$. Thus, $f(x) \in f^{-1}(y)$ for every $x\in X$. Notice that the number of parts of $\PART(f)$ is equal to $\left|\text{Im}(f)\right|$. Now, $$|\text{Im}(f)|\leq \left|f^{-1}(y)\right|\leq \max_{x\in X}\left\{\left|f^{-1}(x)\right|\right\}.$$ It follows that $\RANK(\PART(f))\geq 0$.

Conversely, suppose $X=\{1,\ldots,n\}$ and let $\lambda=(\lambda_1,\ldots,\lambda_r)\in\PP(n)$ such that $\RANK(\lambda)\geq 0$. We define $f\colon X\to X$ as follows: For $1\leq i\leq n$ let $$f(i) = \begin{cases} 1, &  1\leq i\leq \lambda_1; \\ k, & \sum_{j=1}^{k-1} \lambda_j <i\leq \sum_{j=1}^k\lambda_j \textnormal{ where } 2\leq k\leq r.\end{cases}$$ Since $\lambda_1\geq r$, we have that $2,\ldots, r\in f^{-1}(1)$. It follows that $f(f(i))=1$ for every $i\in X$, i.e., $f^2$ is constant. Furthermore, $\PART(f)=\lambda$.
\end{proof}

The proof of Theorem \ref{thm;2} relies on the following two lemmas. For the first, we shall need the notion of a balanced partition. We could not find any mention of this notion other than in \seqnum{A047993} in the OEIS. 

\begin{definition}
A partition whose rank is zero is called a \emph{balanced} partition.
\end{definition}

\begin{lemma}\label{lem;15}
If $n\neq 2$, then the minimum of (\ref{eq;65}) is obtained at a balanced partition.
\end{lemma}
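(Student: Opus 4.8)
The plan is to show that every feasible partition of positive rank is strictly beaten by another feasible partition, so that the minimizer of (\ref{eq;364}) must have rank $0$. Since the feasible set is finite and nonempty — the single-part partition $(n)$ has $\RANK=n-1\geq 0$ — a minimizer $\lambda=(\lambda_1,\ldots,\lambda_r)\in\PP(n)$ with $\RANK(\lambda)\geq 0$ exists, and it suffices to prove that $\RANK(\lambda)=\lambda_1-r=0$ whenever $n\neq 2$. I would argue by contradiction, assuming $\lambda_1-r\geq 1$, and produce from $\lambda$ a partition $\mu$ of $n$ with $\RANK(\mu)\geq 0$ and $||\mu||_2^2<||\lambda||_2^2$.

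The two elementary moves I would use are: (M1) the \emph{split}, replacing the largest part $\lambda_1$ by $\lambda_1-1$ and appending a new part of size $1$; and (M2) the \emph{shift}, replacing $\lambda_1$ by $\lambda_1-1$ and the smallest part $\lambda_r$ by $\lambda_r+1$. Both preserve the total $n$. A one-line computation gives the effect on the objective: (M1) changes $||\cdot||_2^2$ by $-2(\lambda_1-1)$ and (M2) by $-2(\lambda_1-\lambda_r-1)$. Since $\lambda_1-r\geq 1$ and $r\geq 1$ force $\lambda_1\geq 2$, move (M1) always strictly decreases the objective, and (M2) does so whenever $\lambda_1-\lambda_r\geq 2$; the entire content of the argument is therefore in tracking what each move does to the rank.

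The case analysis on a supposed minimizer with $\lambda_1-r\geq 1$ is then: if $\lambda_1-r\geq 2$, move (M1) lowers the rank by at most $2$ and preserves $\RANK\geq 0$; if $\lambda_1-r=1$ and the maximum is attained at least twice ($\lambda_2=\lambda_1$), then (M1) leaves the largest part unchanged while raising the number of parts by one, landing exactly at rank $0$; and if $\lambda_1-r=1$ with $\lambda_1>\lambda_2$ and some part satisfies $\lambda_r\leq r-1$, then (M2) sends the largest part to $\lambda_1-1=r$ while keeping all other parts $\leq r$, again landing at rank $0$ with a strict decrease. In each case $\lambda$ is not a minimizer, a contradiction.

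The only configuration these moves do not cover is rank $1$ with a unique maximum and $\lambda_r=r$; since $\lambda_r\leq\lambda_2\leq\lambda_1-1=r$, this forces every part below $\lambda_1$ to equal $r$, i.e.\ $\lambda=(r+1,r,\ldots,r)$ and $n=r^2+1$. This is the main obstacle, because here no single unit-move can reach rank $0$: every recipient part already sits at the threshold $r$. I would dispose of it by a direct global comparison with the balanced partition $\mu=(r+1,r-1,\ldots,r-1)$, consisting of one part $r+1$ together with $r$ parts equal to $r-1$, which sums to $n$ and has rank $0$; one computes $||\lambda||_2^2-||\mu||_2^2=(r-1)r^2-r(r-1)^2=r(r-1)$, which is strictly positive exactly when $r\geq 2$. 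At $r=1$ this difference vanishes, and correspondingly $\mu$ degenerates into a part of size $0$ — that is, $\lambda=(2)$, $n=2$, admits no balanced partition at all. This isolates $n=2$ as the unique exception and completes the proof.
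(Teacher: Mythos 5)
Your proof is correct, and it follows the same overall mechanism as the paper --- a descent argument showing that every feasible partition of positive rank is strictly beaten by another feasible partition --- but the elementary moves are genuinely different, and this changes where the difficulty lands. The paper splits a unit off the \emph{smallest} part exceeding $1$ (its $\lambda_k$ with $k=\max\{i:\lambda_i>1\}$): when $k>1$ this never touches $\lambda_1$, so the rank drops by exactly $1$ while the objective drops by $2\lambda_k-2>0$, and the only partitions escaping the move are $(n-r+1,1,\ldots,1)$, which the paper dispatches with bespoke exchanges for $r\geq 3$, $r=2$, $r=1$, after checking $n=1,3,4$ by hand. You instead anchor both moves at the \emph{largest} part, which handles rank $\geq 2$ and almost all of rank $1$ uniformly --- including the paper's $k=1$ family and all small $n$ without separate treatment --- but creates the exceptional family $\lambda=(r+1,r,\ldots,r)$, $n=r^2+1$, where no single unit move from $\lambda_1$ can reach rank $0$; your direct comparison with $\mu=(r+1,r-1,\ldots,r-1)$, with gap $\|\lambda\|_2^2-\|\mu\|_2^2=r(r-1)>0$ for $r\geq 2$, closes it, and the degeneration at $r=1$ cleanly isolates $n=2$ as the unique exception, which is arguably a more illuminating account of the hypothesis $n\neq 2$ than the paper's, where $2$ is simply excluded. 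One wording point you should fix: for $r=1$ the dichotomy ``$\lambda_r\leq r-1$ versus $\lambda_r=r$'' is vacuous, since there $\lambda_r=\lambda_1=2>r$, so the uncovered rank-$1$ configurations are the family $(r+1,r,\ldots,r)$ with $r\geq 2$ \emph{together with} $(2)$; your final paragraph does in effect treat $(2)$ as the $r=1$ member of that family, so nothing is actually missing, but the sentence introducing the exceptional case should be phrased to include it explicitly.
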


\begin{proof}
Consider first the cases $n=1,3,4$: If $n=1$, then there is only one partition $(1)$ which is balanced. If $n=3$, then there are only two partitions with a nonnegative rank, namely $(3)$ and $(2,1)$, of which the latter, that is balanced, has the smallest sum of squares. Similarly, if $n=4$, then there are only three partitions with a nonnegative rank, namely $(4), (3,1)$ and $(2,2)$, of which the latter, that is balanced, has the smallest sum of squares.

Assume now that $n\geq 5$ and let $\lambda=(\lambda_1,\ldots,\lambda_r)\in\PP(n)$ such that $\RANK(\lambda)>0$. We shall construct a partition $\lambda'\in\PP(n)$ such that $||\lambda||_2^2> ||\lambda'||_2^2$ and $0 \leq \RANK(\lambda')<\RANK(\lambda)$. To this end, let $k=\max\{1\leq i\leq r\;|\; \lambda_i>1\}$. We distinguish between two cases:
\begin{enumerate}
\item $k>1$. We have
\begin{align}
||\lambda||_2^2&=\sum_{1\leq i\leq r, i\neq k}\lambda_{i}^{2}+(\lambda_k-1+1)^2\nonumber\\&=\sum_{1\leq i\leq r, i\neq k}\lambda_{i}^{2}+(\lambda_k-1)^2+\overbrace{2\lambda_k-1}^{>1}\nonumber\\&>\sum_{1\leq i\leq r, i\neq k}\lambda_{i}^{2}+(\lambda_k-1)^2+1\nonumber\\&= ||\lambda'||_2^2,\nonumber
\end{align} where $\lambda'=(\lambda_1,\ldots,\lambda_{k-1},\lambda_k-1,\lambda_{k+1},\ldots,\lambda_r,1)$. Since $\lambda_1\geq r+1$, we have  $\RANK(\lambda')\geq 0$. Furthermore,  $\RANK(\lambda')<\RANK(\lambda)$.
\item $k=1$. In this case, $\lambda = (n-r+1,\overbrace{1,\ldots,1}^{r-1 \textnormal{ times }})$. First, assume that $r\geq 3$. It is easy to see that $$(n-r+1)^{2}+r-1>(n-r)^{2}+4+r-2\iff n-r> 1.$$ Now, by assumption, $n-r+1>r$. Thus, $n-r>r-1\geq 2$ and we take $\lambda'=(n-r,2,\overbrace{1,\ldots,1}^{r-2 \textnormal{ times }})$. 

Assume now that $r=2$. Then $\lambda=(n-1,1)$ and it is easy to see that $$(n-1)^{2}+1>(n-2)^{2}+2\iff n\geq3.$$ Thus, we take $\lambda'=(n-2,1,1)$.

Finally, assume that $r=1$. Then $\lambda=(n)$ and we have $$n^2 > (n-1)^2 +1 \iff n\geq 2.$$ Then we take $\lambda'=(n-1,1)$. 

In each of these cases, $||\lambda||_2^2> ||\lambda'||_2^2$ and $0\leq \RANK(\lambda')<\RANK(\lambda)$.
\end{enumerate}

\end{proof}

\begin{lemma}\label{lem;16}
Let $\lambda=(\lambda_1,\ldots,\lambda_r)\in\PP(n)$ such that $\lambda_j > \lambda_k + 1$ for some $1\leq j<k\leq r$. Let $\lambda'\in\PP(n)$ correspond to the parts $\lambda_1,\ldots,\lambda_j-1,\ldots,\lambda_k+1,\ldots,\lambda_r$. Then $||\lambda||_2^2>||\lambda'||_2^2$.
\end{lemma}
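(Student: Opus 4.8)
The plan is to exploit the fact that the squared Euclidean norm of a partition depends only on the multiset of its parts, not on their arrangement, so that passing from $\lambda$ to $\lambda'$ affects only the two coordinates that actually change. I would therefore reduce the entire comparison to the single pair $(\lambda_j,\lambda_k)$, which is replaced by $(\lambda_j-1,\lambda_k+1)$, and compute the difference of the two sums of squares directly.

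Concretely, I would write
$$
||\lambda||_2^2 - ||\lambda'||_2^2 = \left(\lambda_j^2+\lambda_k^2\right) - \left((\lambda_j-1)^2+(\lambda_k+1)^2\right),
$$
all of the remaining squared parts contributing identically to both norms and hence cancelling. Expanding the right-hand side gives $(2\lambda_j-1)-(2\lambda_k+1) = 2(\lambda_j-\lambda_k-1)$. By the hypothesis $\lambda_j>\lambda_k+1$, and since the parts are integers, we in fact have $\lambda_j-\lambda_k-1\geq 1$, so this quantity is strictly positive; this yields $||\lambda||_2^2 > ||\lambda'||_2^2$ at once.

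Before drawing that conclusion I would check that $\lambda'$ is genuinely an element of $\PP(n)$. The transformation preserves the total $\sum_i\lambda_i = n$, because a single unit is merely transferred from the $j$-th part to the $k$-th part, and every part stays positive since $\lambda_j-1\geq \lambda_k+1\geq 2>0$. Hence $\lambda'$, after reordering into weakly decreasing order, is a legitimate partition of $n$, and the comparison of norms is meaningful.

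I do not anticipate any genuine obstacle here: the statement is the discrete manifestation of the strict convexity of $t\mapsto t^2$, and the only point needing (minimal) care is the bookkeeping that guarantees the cancellation of the unchanged parts. The inequality is precisely the elementary \emph{smoothing} estimate underlying the intuition that a minimal sum of squares is obtained by making the parts as equal as possible — exactly the structure that Theorem \ref{thm;2} is designed to capture, and which Lemma \ref{lem;15} complements by forcing the rank to $0$.
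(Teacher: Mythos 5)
Your proof is correct and follows essentially the same route as the paper: both reduce the comparison to the two affected parts and verify that $\lambda_j^2+\lambda_k^2>(\lambda_j-1)^2+(\lambda_k+1)^2$ is exactly the hypothesis $\lambda_j>\lambda_k+1$. Your additional check that $\lambda'$ remains a partition of $n$ is a harmless (and welcome) elaboration of what the paper leaves implicit.
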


\begin{proof}
It suffices to prove that  $$\lambda_j^{2}+\lambda_k^{2}>(\lambda_{j}-1)^{2}+(\lambda_{k}+1)^{2},$$ which is easily seen to be equivalent to $\lambda_j>\lambda_k+1$.
\end{proof}

\paragraph{Proof of Theorem \ref{thm;2}}
The assertion follows immediately from the combination of Lemma \ref{lem;15} together with Lemma \ref{lem;16}.
\qed

In our work we shall make extensive use of two functions $l_n,u_n\colon \R\setminus\{1\}\to\R$, given by \begin{align}
l_n(x) &=x^2+\frac{(n-x)^2}{x-1} \textnormal{ and}\nonumber\\
u_n(x)&=x^2+\frac{(n-x)^2}{x-1}+\frac{x-1}{4}. \nonumber
\end{align} 

The bounds in the following lemma are visualized in Figure \ref{fig;e} for $n=100$.

\begin{lemma}\label{lem;52}
Let $2\leq x,n\in\N$. Then
\begin{equation}\label{eq;dfa}
l_n(x)\leq x^{2}+r(a+1)^{2}+(x-1-r)a^{2} \leq u_n(x),
\end{equation} where $a\in\N_0$ and $0\leq r<x-1$ are such that $n-x = a(x-1) + r$.
\end{lemma}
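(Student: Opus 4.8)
The plan is to compute the middle quantity of (\ref{eq;dfa}) and both bounds explicitly in terms of $a$ and $r$, and then to reduce everything to a single clean identity. First I would expand the sum of squares of the smaller parts. Using
$$r(a+1)^2+(x-1-r)a^2=(x-1)a^2+r(2a+1),$$
the quantity in the middle of (\ref{eq;dfa}) equals $x^2+(x-1)a^2+r(2a+1)$. Next I would rewrite the continuous term appearing in $l_n$ by means of the division $n-x=a(x-1)+r$: squaring gives $(n-x)^2=a^2(x-1)^2+2ar(x-1)+r^2$, and hence
$$\frac{(n-x)^2}{x-1}=a^2(x-1)+2ar+\frac{r^2}{x-1}.$$

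The heart of the argument is the identity obtained by subtracting these two expressions. After cancelling the common term $x^2$, the discrete sum of squares of the smaller parts exceeds its continuous relaxation by exactly
$$\left(x^2+(x-1)a^2+r(2a+1)\right)-\left(x^2+\frac{(n-x)^2}{x-1}\right)=r-\frac{r^2}{x-1}=\frac{r(x-1-r)}{x-1}.$$
It then remains to control this single deviation term. Since $0\le r<x-1$, the factor $x-1-r$ is positive (and $r\ge 0$), so the deviation is nonnegative, which is precisely the lower bound $l_n(x)\le x^{2}+r(a+1)^{2}+(x-1-r)a^{2}$. For the upper bound I would maximize $r(x-1-r)$ over $r\in\R$ by completing the square (equivalently, by AM--GM): its maximum is $(x-1)^2/4$, attained at $r=(x-1)/2$, so the deviation is at most $\frac{1}{x-1}\cdot\frac{(x-1)^2}{4}=\frac{x-1}{4}$. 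This is exactly the gap $u_n(x)-l_n(x)$, and adding $x^2$ back throughout recovers both inequalities of (\ref{eq;dfa}).

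I do not expect any serious obstacle: the lemma reduces entirely to the algebraic identity above together with a one-variable quadratic optimization. The only points requiring a little care are the bookkeeping needed to isolate the deviation term $\frac{r(x-1-r)}{x-1}$ correctly, and the observation that although the maximizer $r=(x-1)/2$ need not be an integer, the bound $\frac{x-1}{4}$ is valid because it is taken as a supremum over all real $r\in[0,x-1]$ and therefore dominates every admissible integer value of $r$.
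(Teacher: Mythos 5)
Your proposal is correct and follows essentially the same route as the paper: the paper verifies the identity $s(b+1)^{2}+(q-s)b^{2}-\frac{m^2}{q}=\frac{s(q-s)}{q}$ with $m=n-x$, $q=x-1$, $s=r$, which is exactly your deviation term $\frac{r(x-1-r)}{x-1}$, and then bounds it below by $0$ and above by $\frac{q}{4}$ just as you do. The only cosmetic difference is that you obtain the upper bound by completing the square over real $r$, while the paper invokes the fact that the maximal product of two integers summing to $q$ is $\left\lfloor \frac{q^2}{4}\right\rfloor\leq\frac{q^2}{4}$; both give the same estimate $\frac{x-1}{4}=u_n(x)-l_n(x)$.
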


\begin{proof}
Let $m\in\N_0, q\in\N$ and write $m = bq + s$ where $b\in\N_0$ and $0\leq s<q$. It is immediately verified that $$s(b+1)^{2}+(q-s)b^{2}-\frac{m^2}{q}=\frac{s(q-s)}{q}.$$ Clearly, $\frac{s(q-s)}{q}\geq 0$. On the other hand, $\frac{s(q-s)}{q}\leq \frac{q}{4}$ due to the well known fact that the maximal product of two integers whose sum is $q$ is $\left\lfloor \frac{q^2}{4}\right\rfloor\leq \frac{q^2}{4}$ (cf.\ \seqnum{A002620} in the OEIS). It follows that \begin{equation}\label{eq;dfb}\frac{m^2}{q}\leq s(b+1)^{2}+ (q-s)b^{2}\leq \frac{m^2}{q}+\frac{q}{4}.\end{equation} 
The two inequalities in (\ref{eq;dfa}) are now proved by setting $m=n-x, q=x-1$ and adding $x^2$ to (\ref{eq;dfb}).
\end{proof}

\begin{figure}
\includegraphics[width=10cm, height=8cm]{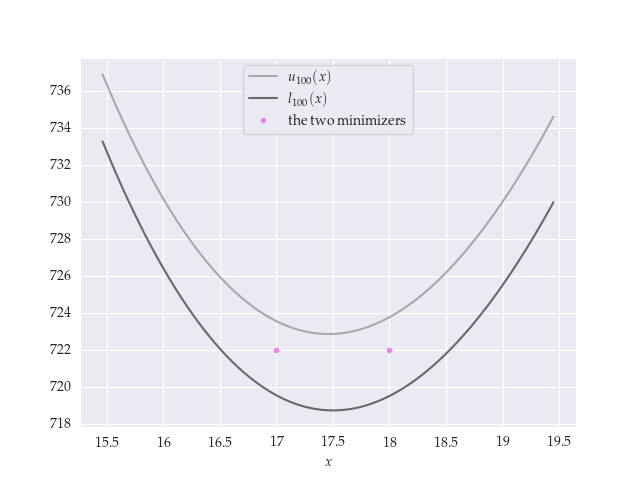}
\centering
\caption{The lower and upper bounds of Lemma \ref{lem;52}, visualized for $n=100$.}
\label{fig;e}
\end{figure}

We may now prove Theorem \ref{th;211}. 

\paragraph{Proof of Theorem \ref{th;211}}
The function $u_n(x)$ is continuous in $(1,\infty)$ and $$\lim_{x\to 1^+}u_n(x) = \lim_{x\to \infty}u_n(x)=\infty.$$ Furthermore,
$$u'_n(x)=\frac{8x^{3}-11x^{2}-2x+8n-4n^{2}+1}{4(x-1)^{2}}.$$ Since the discriminant of the numerator of $u'_n(x)$ is negative for $n\geq 3$, the equation $u'_n(x)=0$ has a unique real solution $x_0^{(n)}$, given by $x_0^{(n)}=\frac{11+C_n+169/C_n}{24}$, where $$C_n=\sqrt[3]{3456n^{2}-6912n+1259-\sqrt{(3456n^{2}-6912n+1259)^{2}-169^3}}.$$ It follows that, restricted to $(1,\infty)$, the function $u_n(x)$ obtains its global minimum at $x_0^{(n)}$. Now, for every $0<y<z\in\R$, we have $$\frac{y^{2}}{2z}\leq z-\sqrt{z^{2}-y^{2}}\leq\frac{y^{2}}{z}.$$ Thus, $C_n\leq 1$ for $n\geq 28$ and therefore \begin{align}
x_0^{(n)}&=\frac{11+C_n+169/C_n}{24}\nonumber\\&\leq\frac{1}{2}+\frac{169}{24} \sqrt[3]{\frac{2(3456n^{2}-6912n+1259)}{169^3}}\nonumber\\&\leq\frac{1}{2}+2^{-1/3}n^{2/3}\nonumber
\end{align} (notice, for later use, that $\lim_{n\to\infty}\frac{x_0^{(n)}}{n^{2/3}}=2^{-1/3}$).
Since $u_n(x)$ is increasing in $\left[x_0^{(n)}, \infty\right)$, we have 
\begin{align}
u_n\left(\left\lceil x_0^{(n)} \right\rceil\right)&\leq u_n\left(x_0^{(n)}+1\right)\nonumber\\&\leq  u_n\left(\frac{3}{2}+2^{-1/3}n^{2/3}\right)\nonumber\\ &=\left(\frac{3}{2}+2^{-1/3}n^{2/3}\right)^2+\frac{\left(n-\left(\frac{3}{2}+2^{-1/3}n^{2/3}\right)\right)^2}{\frac{1}{2}+2^{-1/3}n^{2/3}}+\frac{\frac{1}{2}+2^{-1/3}n^{2/3}}{4}\nonumber\\&\leq \left(2^{-2/3}+2^{1/3}\right)n^{4/3},\nonumber
\end{align} where the last inequality holds for $n\geq 5$. Now, it follows from Lemma \ref{lem;52} that $m_n\leq u_n\left(\left\lceil x_0^{(n)} \right\rceil\right)$, which concludes the proof of the upper bound. 

To prove the lower bound, we notice that, for $n\geq 3$ and restricted to $(1,\infty)$, the function $l_n(x)$ obtains its global minimum at $y_0^{(n)}$, given by $y_0^{(n)}=\frac{D_n+1+1/D_n}{2}$, where $$D_n=\sqrt[3]{(2n^{2}-4n+1)-\sqrt{(2n^{2}-4n+1)^{2}-1}}.$$ We have 
\begin{align}
    l_n\left(y^{(n)}_0\right)&=\left(y^{(n)}_0\right)^{2}+\frac{\left(y_0^{(n)}-n\right)^{2}}{y_0^{(n)}-1}\nonumber\\&\geq\frac{1}{4D_n^{2}}\nonumber\\&\geq\frac{\sqrt[3]{(2n^{2}-4n+1)^{2}}}{4}\nonumber\\&\geq\frac{n^{4/3}}{4},\nonumber
\end{align} where the last inequality holds for $n\geq 4$. By Lemma \ref{lem;52}, $m_n\geq l_n\left(y_0^{(n)}\right)$.
\qed

\paragraph{Proof of Corollary \ref{cor;222}}
By Theorem \ref{th;211}, if $n\geq 28$, then $m_n \leq(2^{-2/3}+2^{1/3})n^{4/3}$. Let $f\colon X\to X$ be a function such that $||\PART(f)||^2_2=m_n$. Notice that $\deg(f)=\frac{m_n}{n}$. It follows that, if the limit in (\ref{eq;44}) exists for $k=2$, then
\begin{align}
\lim_{n\to\infty}\max_{f\colon X\to X}\frac{\deg(f^2)}{\deg(f)^{3/2}}\frac{1}{n^{1/2}}&\geq\lim_{n\to\infty} \frac{n}{\left(\left(2^{-2/3}+2^{1/3}\right)n^{1/3}\right)^{3/2}}\frac{1}{n^{1/2}}\nonumber\\&=2\cdot3^{-3/2}.\nonumber\end{align}
\qed

\paragraph{Proof of Theorem \ref{T;6}}
Let $x_0^{(n)}$ and $y_0^{(n)}$ be the points, calculated in the proof of Theorem \ref{th;211}, at which $u_n$ and $l_n$, respectively, obtain their global minimum. We wish to solve the equation 
\begin{equation}\label{eq;981}
l_n(x) = u_n\left(\left\lfloor x_0^{(n)}\right\rfloor\right)
\end{equation} (see Figure \ref{fig;egb} for a visualization for $n=100$). The function $l_n(x)$ is continuous in $(1,\infty)$ and $$\lim_{x\to 1^+}l_n(x) = \lim_{x\to \infty}l_n(x)=\infty.$$ Since $l_n\left(y_0^{(n)}\right)< u_n\left(x_0^{(n)}\right)\leq u_n\left(\left\lfloor x_0^{(n)}\right\rfloor\right)$, by the mean value theorem, equation (\ref{eq;981}) has at least two real solutions in $(1,\infty)$. Similarly, $l_n(x)$ is continuous in $(-\infty, 1)$ and $$\lim_{x\to 1^-}l_n(x) = -\infty, \lim_{x\to -\infty}l_n(x)=\infty.$$ Thus, equation (\ref{eq;981}) has at least one real solution in $(-\infty, 1)$ and, since solving it is equivalent to finding the roots of the cubic polynomial $$x^{3}+\left(-2n-u_n\left(\left\lfloor x_0^{(n)}\right\rfloor\right)\right)x+n^{2}+u_n\left(\left\lfloor x_0^{(n)}\right\rfloor\right),$$ we conclude that equation (\ref{eq;981}) has exactly two real solutions 
$1<x_1^{(n)}<x_2^{(n)}$. Necessarily, $x_1^{(n)}< \left\lfloor x_0^{(n)}\right\rfloor, y_0^{(n)}< x_2^{(n)}$. Thus, if $t_n$ minimizes (\ref{eq;65}), then $t_n\in \left\{\left\lceil x_1^{(n)} \right\rceil, \ldots,  \left\lfloor x_2^{(n)} \right\rfloor\right\}$.
\qed

\begin{figure}[H]
\includegraphics[width=10cm, height=8cm]{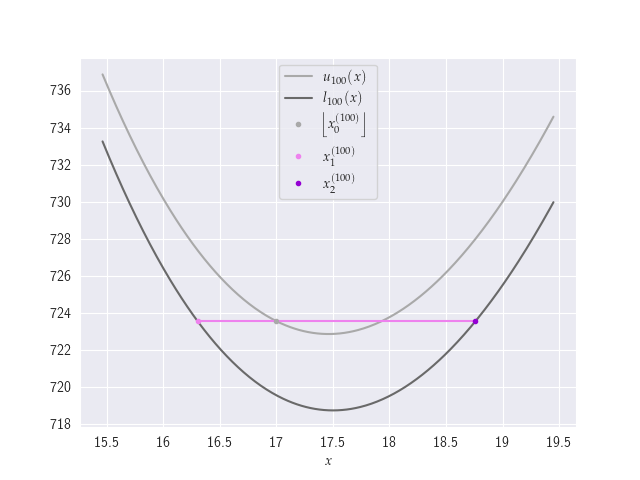}
\centering
\caption{We have $\left\lceil x_1^{(100)}\right\rceil=17$ and $\left\lfloor x_2^{(100)}\right\rfloor=18$. Thus, $t_{100}\in\{17,18\}$.}
\label{fig;egb}
\end{figure}

In the proof of Theorem \ref{thm;456} we shall make use of the following notation (cf. \cite[(9.3)]{G}): Let $(a_n)_{n\in\N}$ and $(b_n)_{n\in\N}$ be two sequences. By $a_n \prec b_n$ we mean that $\lim_{n\to\infty}\frac{a_n}{b_n}=0$.

\paragraph{Proof of Theorem \ref{thm;456}}
Let $x_0^{(n)}, x_1^{(n)}$ and $x_2^{(n)}$ be as in Theorem \ref{T;6} and consider the cubic polynomial \begin{equation}\label{eq;hse}
x^{3}+\left(-2n-u_n\left(\left\lfloor x_0^{(n)}\right\rfloor\right)\right)x+n^{2}+u_n\left(\left\lfloor x_0^{(n)}\right\rfloor\right).\end{equation}
Since $x_0^{(n)},t_n\in \left\{\left\lceil x_1^{(n)} \right\rceil, \ldots, \left\lfloor x_2^{(n)} \right\rfloor\right\}$ and $x_0^{(n)}=\Theta\left(n^{2/3}\right)$, it suffices to show that $x_2^{(n)}-x_1^{(n)}\prec n^{2/3}$. To this end, denote $p_n=-2n-u_n\left(\left\lfloor x_0^{(n)}\right\rfloor\right)$ and $q_n=n^{2}+u_n\left(\left\lfloor x_0^{(n)}\right\rfloor\right)$. By Cardano's formula (e.g., \cite[p. 128]{V}), the three roots of (\ref{eq;hse}) are given by $$\sqrt[3]{-\frac{q_n}{2}+\sqrt{\frac{p_n^3}{27}+\frac{q_n^2}{4}}}+\sqrt[3]{-\frac{q_n}{2}-\sqrt{\frac{p_n^3}{27}+\frac{q_n^2}{4}}}.$$ In the appendix we show that $\frac{p_n^3}{27}+\frac{q_n^2}{4}\prec n^4$. Since $u_n\left(\left\lfloor x_0^{(n)}\right\rfloor\right)=\Theta\left(n^{4/3}\right)$, we have $q_n=\Theta\left(n^2\right)$. Hence, $$-\frac{q_n}{2}+\sqrt{\frac{p_n^3}{27}+\frac{q_n^2}{4}}= r_n(\cos(\pi-\theta_n) + i\sin(\pi-\theta_n)),$$ where $r_n=\Theta\left(n^2\right)$ and $\theta_n = \arctan\left(\frac{2\left|\sqrt{\frac{p_n^3}{27}+\frac{q_n^2}{4}}\right|}{q_n}\right)$. Notice that $\lim_{n\to\infty}\theta_n=0$. Proceeding as in \cite[Example 3.106]{V}, we conclude that $$
    x_1^{(n)}  =r_n^{1/3}\cos\left(\frac{\pi + \theta_n}{3}\right) \;\textnormal{ and }\; x_2^{(n)}=r_n^{1/3}\cos\left(\frac{\pi -\theta_n}{3}\right).$$
Thus, applying the trigonometric identity
$$\cos\alpha-\cos\beta = 2\sin\left(\frac{\beta+\alpha}{2}\right)\sin\left(\frac{\beta-\alpha}{2}\right),$$ that holds for every $\alpha,\beta\in\R$, we see that

$$\lim_{n\to\infty}\frac{x_2^{(n)} - x_1^{(n)}}{n^{2/3}} =\lim_{n\to\infty}\left(\frac{r_n}{n^2}\right)^{1/3}\sin\left(\frac{\pi}{3}\right)\sin\left(\frac{\theta_n}{3}\right)=0.$$
\qed

\begin{remark}
%\begin{enumerate}
%\item Let $x_n$ be the smallest minimizer of (\ref{eq;65}). Using Vi\`ete's formulas (e.g., \cite[p. 89]{V}) and applying arguments similar to those in the proof of \ref{th;211}, we can show that $x_n=\OO(n^{2/3})$. Additional analysis is necessary to verify that $x_n=\Theta(n^{2/3})$.
%\item 
It should be emphasized, that our approach does not, in general, provide the true maximum of (\ref{eq;001}). For example, let $n=8$ and assume that $X=\{1,\ldots,8\}$. Consider the function $h\colon X\to X$ given by $$h(i) = \begin{cases} 1, & i\in\{1,2,3\};\\ 2, & i\in\{4,5\};\\ 3, & i\in\{6,7\};\\ 4, & i=8.
\end{cases}$$ Then, $h^2\colon X\to X$ is given by $$h^2(i) = \begin{cases} 1, & i\in\{1,\ldots,7\};\\ 2, & i=8.
\end{cases}$$ It follows that $$\frac{\deg\left(h^2\right)}{\deg(h)^{3/2}} = \frac{\frac{50}{8}}{\left(\frac{18}{8}\right)^{3/2}}=\frac{50}{27}\approx 1.85185.$$ In contrast, our approach provides the partition $(3,3,2)$ that corresponds to a function $f\colon X\to X$ such that $$\frac{\deg\left(f^2\right)}{\deg(f)^{3/2}}=\frac{8}{\left(\frac{22}{8}\right)^{3/2}}\approx 1.75424.$$
%\end{enumerate}
\end{remark}

\begin{lemma}\label{lem;660}
The sequence $(m_n)_{n\in\N}$ is strictly increasing.
\end{lemma}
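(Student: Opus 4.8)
The plan is to prove $m_n < m_{n+1}$ for every $n\in\N$ by starting from an optimal partition of $n+1$ and deleting a single cell so as to produce a \emph{feasible} partition of $n$ (that is, one with nonnegative rank) whose sum of squares is strictly smaller. By Theorem \ref{thm;2} together with the equivalence leading to (\ref{eq;364}), $m_k$ equals the minimum of $||\lambda||_2^2$ over all $\lambda\in\PP(k)$ with $\RANK(\lambda)\geq 0$, so exhibiting one such partition $\lambda\in\PP(n)$ with $||\lambda||_2^2<m_{n+1}$ immediately gives $m_n\leq ||\lambda||_2^2 < m_{n+1}$.

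Concretely, I would fix $n\in\N$ and let $\mu=(\mu_1,\ldots,\mu_r)\in\PP(n+1)$ attain $m_{n+1}$, so that $\RANK(\mu)=\mu_1-r\geq 0$. The operation I would apply is to delete one cell from the \emph{smallest} part $\mu_r$, and I would split into two cases according to the value of $\mu_r$. If $\mu_r=1$, deleting this part yields $\lambda=(\mu_1,\ldots,\mu_{r-1})\in\PP(n)$ (note $r\geq 2$ here, since $n+1\geq 2$ rules out $\mu=(1)$); its largest part is still $\mu_1$ and it has $r-1$ parts, whence $\RANK(\lambda)=\mu_1-(r-1)=\RANK(\mu)+1\geq 0$ and $||\lambda||_2^2=||\mu||_2^2-1<m_{n+1}$. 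If $\mu_r\geq 2$, decreasing this part by one yields $\lambda=(\mu_1,\ldots,\mu_{r-1},\mu_r-1)\in\PP(n)$, which is still weakly decreasing with positive parts and with unchanged largest part and unchanged number of parts, so $\RANK(\lambda)=\RANK(\mu)\geq 0$, while $||\lambda||_2^2=||\mu||_2^2-(2\mu_r-1)\leq ||\mu||_2^2-3<m_{n+1}$. In either case $\lambda$ is feasible for $n$, delivering $m_n\leq ||\lambda||_2^2<m_{n+1}$.

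Since the whole argument is a single-cell deletion, almost no computation is involved; the one point that genuinely requires care is \emph{which} cell to remove. Removing a cell from a large part, for instance from $\mu_1$, could turn the rank negative and thereby destroy feasibility, whereas removing it from the smallest part either strictly increases the rank (when $\mu_r=1$) or leaves it unchanged (when $\mu_r\geq 2$); this is precisely the crux that makes both cases go through. I therefore expect no real obstacle beyond checking that the constructed $\lambda$ is a bona fide partition, i.e.\ weakly decreasing with positive parts, which is immediate in each case, and noting that the smallest case $n=1$ (where $\mu=(2)$, $\mu_r=2$, and $\lambda=(1)$) is already covered by the $\mu_r\geq 2$ branch.
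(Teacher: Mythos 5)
Your proposal is correct and is essentially the paper's own argument: both take an optimal partition of $n+1$, decrement its last part by one (dropping it if it becomes zero), observe that the rank stays nonnegative, and conclude from the strict drop in $||\cdot||_2^2$ that $m_n<m_{n+1}$ (the paper merely phrases this as a contradiction). One cosmetic quibble: in your $\mu_r\geq 2$ branch with $r=1$ the largest part is \emph{not} unchanged, but the rank is still $\mu_1-2\geq 0$ since $\mu_1=n+1\geq 2$, so the conclusion stands.
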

\begin{proof}
Assume that $m_{n+1}\leq m_n$ for some $n\in\N$ and let $\lambda=(\lambda_{1},\ldots,\lambda_{r})\in\PP(n+1)$ such that $||\lambda||_2^2=m_{n+1}$. Then $\lambda'=(\lambda_{1},\ldots,\lambda_{r-1}, \lambda_{r}-1)\in\PP(n)$ (omitting the last part, if necessary) such that $\RANK(\lambda')\geq 0$. Now, $$||\lambda'||_{2}^{2}<||\lambda||_{2}^{2}=m_{n+1}\leq m_n,$$ contradicting the minimality of $m_n$.
\end{proof}

\begin{lemma}\label{lem;661}
Let $\lambda=(\lambda_{1},\ldots,\lambda_{r})\in\PP(n)$. Then $n$ and $||\lambda||_2^2$ have the same parity. In particular, $n$ and $m_n$ have the same parity.
\end{lemma}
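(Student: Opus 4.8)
The plan is to prove that $n$ and $\|\lambda\|_2^2 = \sum_{i=1}^r \lambda_i^2$ always have the same parity, and then apply this to $m_n$, which is by definition the value $\|\lambda\|_2^2$ of some minimizing partition $\lambda \in \PP(n)$. The underlying arithmetic fact is elementary: for any integer $a$, the quantity $a^2 - a = a(a-1)$ is always even, since it is a product of two consecutive integers. Hence $a^2 \equiv a \pmod 2$ for every $a \in \Z$.

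The key step is to sum this congruence over the parts of $\lambda$. Since $\lambda_i^2 \equiv \lambda_i \pmod 2$ for each $i$, summing over $i = 1, \ldots, r$ gives
\begin{equation}
\|\lambda\|_2^2 = \sum_{i=1}^r \lambda_i^2 \equiv \sum_{i=1}^r \lambda_i = n \pmod 2, \nonumber
\end{equation}
where the last equality holds because $\lambda$ is a partition of $n$. This establishes that $n$ and $\|\lambda\|_2^2$ have the same parity for every $\lambda \in \PP(n)$.

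For the final assertion, I would simply specialize to a minimizing partition. Let $\lambda \in \PP(n)$ be any partition with $\RANK(\lambda) \geq 0$ that attains the minimum of problem (\ref{eq;364}), so that $m_n = \|\lambda\|_2^2$. Applying the congruence just proved to this particular $\lambda$ yields $m_n \equiv n \pmod 2$, which is exactly the claim that $n$ and $m_n$ share the same parity.

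There is essentially no obstacle here; the entire argument rests on the single observation $a^2 \equiv a \pmod 2$, and the only care needed is to note that the general parity statement applies to \emph{every} partition of $n$, so in particular to whichever partition realizes the minimum $m_n$. The hypothesis $\RANK(\lambda) \geq 0$ plays no role in the parity computation itself and is invoked only to guarantee that such a minimizing partition exists within the feasible region of (\ref{eq;364}).
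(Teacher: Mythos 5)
Your proof is correct, but it takes a genuinely different route from the paper. The paper proves the parity claim by induction on $n$: given $\lambda\in\PP(n+1)$, it passes to $\lambda'=(\lambda_1,\ldots,\lambda_{r-1},\lambda_r-1)\in\PP(n)$ and uses the identity $\lambda_r^2=(\lambda_r-1)^2+2\lambda_r-1$, so that $||\lambda||_2^2$ differs from $||\lambda'||_2^2$ by the odd number $2\lambda_r-1$, flipping parity exactly as $n$ does; this reuses the same ``decrement the last part'' operation that drives the proof of Lemma \ref{lem;660}, which is presumably why the author chose it. You instead argue directly: $a^2\equiv a\pmod 2$ for every integer $a$ (since $a(a-1)$ is even), hence $\sum_i\lambda_i^2\equiv\sum_i\lambda_i=n\pmod 2$ in one line, with no induction at all. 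Your argument is shorter, more elementary, and arguably more transparent, since it isolates the single congruence doing all the work; the paper's induction buys nothing extra here beyond stylistic uniformity with the neighboring lemma. Your handling of the ``in particular'' clause is also fine: $m_n$ is by definition the minimum of (\ref{eq;65}), which by Theorem \ref{thm;2} is attained as $||\lambda||_2^2$ for some $\lambda\in\PP(n)$ with $\RANK(\lambda)\geq 0$ (and trivially so for $n=1$), and you correctly note that the rank hypothesis is irrelevant to the parity computation itself.
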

\begin{proof}
We proceed by induction on $n$. For $n=1$, the assertion holds trivially. Assume that the assertion holds for $n\in\N$ and let $\lambda=(\lambda_{1},\ldots,\lambda_{r})\in\PP(n+1)$. Then $\lambda'=(\lambda_{1},\ldots,\lambda_{r-1}, \lambda_{r}-1)\in\PP(n)$ (omitting the last part, if necessary). Now, \begin{align}
||\lambda||_2^2&=\lambda_1^2+\cdots+\lambda_r^2\nonumber\\&=\lambda_1^2+\cdots+\lambda_{r-1}^2+(\lambda_r-1)^2 +2\lambda_r -1\nonumber\\&=\overbrace{||\lambda'||_2^2}^{=\textnormal{ parity of }n-1}+\overbrace{2\lambda_r -1}^{\textnormal{odd}}\nonumber\\&=\textnormal{ parity of }n.\nonumber
\end{align}
\end{proof}

\iffalse
\paragraph{Proof of Theorem \ref{t;t1}}
Denote by $w <x_1^{(n)}<x_2^{(n)}$ the roots of (\ref{eq;kf}). Since $w=-x_1^{(n)}-x_2^{(n)}$, we must have $x_1^{(n)}-w>x_2^{(n)}-x_1^{(n)}$, for, otherwise, $x_1^{(n)}-w\leq x_2^{(n)}-x_1^{(n)}$, which leads to  $x_1^{(n)}\leq 0$, contradicting the positivity of $x_1^{(n)}$. Now, using Vi\`ete's formulas (e.g., \cite[p. 89]{V}), \begin{align}
3\left(2n+g(\lfloor x_0^{(n)}\rfloor)\right)&=(x_2^{(n)}-x_1^{(n)})^{2}+(x_2^{(n)}-w)^{2}+(x_1^{(n)}-w)^{2} \nonumber\\&>2(x_2^{(n)}-x_1^{(n)})^{2}+(x_2^{(n)}-w)^{2}\nonumber\\&>3(x_2^{(n)}-x_1^{(n)})^{2}.\nonumber\end{align} We conclude that \begin{align}
x_2^{(n)}-x_1^{(n)} &< \sqrt{2n+g(\lfloor x_0^{(n)}\rfloor)}\leq 2n^{2/3},\nonumber
\end{align} where in the last inequality we applied similar arguments as in the proof of \ref{th;211}. Thus, if $n_0$ minimizes (\ref{eq;65}), then $n_0 \leq x_0^{(n)} + x_2^{(n)} - x_1^{(n)} \leq 3n^{2/3}$.
\qed
\fi

\section{Appendix}

Denote $z_n=\left\lfloor x_0^{(n)}\right\rfloor$ and $X_i=\frac{1}{(z_n-1)^i}$ for $i=1,2,3$. We have

\begin{align}
\frac{p_n^3}{27}+\frac{q_n^2}{4}&=\boxed{-\frac{n^{6}}{27}X_{3}}+\frac{2n^{5}z_{n}}{9}X_{3}-\frac{2n^{5}}{9}X_{2}-\frac{5n^{4}z_{n}^{2}}{9}X_{3}\boxed{-\frac{n^{4}z_{n}^{2}}{9}X_{2}}+\frac{31n^{4}z_{n}}{36}X^{2}\boxed{+\frac{n^{4}}{4}}\nonumber	
\\&+\frac{5n^{4}}{18}X_{2}+\frac{n^{4}}{18}X_{1}+\frac{20n^{3}z_{n}^{3}}{27}X_{3}+\frac{4n^{3}z_{n}^{3}}{9}X_{2}-\frac{11n^{3}z_{n}^{2}}{9}X_{2}-\frac{4n^{3}z_{n}^{2}}{9}X_{1}\nonumber	
\\&-\frac{10n^{3}z_{n}}{9}X_{2}-\frac{2n^{3}z_{n}}{9}X_{1}	-\frac{8n^{3}}{27}+\frac{n^{3}}{9}X_{1}-\frac{5n^{2}z_{n}^{4}}{9}X_{3}-\frac{2n^{2}z_{n}^{4}}{3}X_{2}\boxed{-\frac{n^{2}z_{n}^{4}}{9}X_{1}}	\nonumber	
\\&+\frac{13n^{2}z_{n}^{3}}{18}X_{2}+\frac{5n^{2}z_{n}^{3}}{6}X_{1}+\frac{n^{2}z_{n}^{2}}{18}+\frac{5n^{2}z_{n}^{2}}{3}X_{2}+\frac{119n^{2}z_{n}^{2}}{144}X_{1}+\frac{n^{2}z_{n}}{72}-\frac{n^{2}z_{n}}{12}X_{1}\nonumber	
\\&	-\frac{n^{2}}{72}-\frac{19n^{2}}{144}X_{1}+\frac{2nz_{n}^{5}}{9}X_{3}+\frac{4nz_{n}^{5}}{9}X_{2}+\frac{2nz_{n}^{5}}{9}X_{1}-\frac{2nz_{n}^{4}}{9}-\frac{nz_{n}^{4}}{9}X_{2}-\frac{nz_{n}^{4}}{3}X_{1}	\nonumber	
\\&-\frac{nz_{n}^{3}}{9}-\frac{10nz_{n}^{3}}{9}X_{2}-\frac{29nz_{n}^{3}}{24}X_{1}+\frac{7nz_{n}^{2}}{72}-\frac{nz_{n}^{2}}{6}X_{1}+\frac{nz_{n}}{36}+\frac{19nz_{n}}{72}X_{1}-\frac{n}{72}\nonumber	
\\&\boxed{-\frac{z_{n}^{6}}{27}}-\frac{z_{n}^{6}}{27}X_{3}-\frac{z_{n}^{6}}{9}X_{2}-\frac{z_{n}^{6}}{9}X_{1}-\frac{z_{n}^{5}}{36}-\frac{z_{n}^{5}}{36}X_{2}-\frac{z_{n}^{5}}{18}X_{1}+\frac{13z_{n}^{4}}{48}+\frac{5z_{n}^{4}}{18}X_{2}\nonumber\\ &+\frac{79z_{n}^{4}}{144}X_{1}+\frac{239z_{n}^{3}}{1728}+\frac{5z_{n}^{3}}{36}X_{1}-\frac{11z_{n}^{2}}{96}-\frac{19z_{n}^{2}}{144}X_{1}-\frac{19z_{n}}{576}+\frac{7}{432}.\nonumber
\end{align} Recall (cf.\ the proof of Theorem \ref{th;211}) that $\lim_{n\to\infty} \frac{z_n}{n^{2/3}} = 2^{-1/3}$. Thus, the expansion of $\frac{p_n^3}{27}+\frac{q_n^2}{4}$ contains terms of order $n^4$ (the boxed terms). Nevertheless, the overall order is strictly less than $n^4$, as the following calculation shows:

{\footnotesize\begin{align}
&\lim_{n\to\infty}\frac{1}{n^4}\left(-\frac{n^{6}}{27}X_{3}-\frac{n^{4}z_{n}^{2}}{9}X_{2}+\frac{n^{4}}{4}-\frac{n^{2}z_{n}^{4}}{9}X_{1}-\frac{z_{n}^{6}}{27}\right)=\nonumber  \\
&\lim_{n\to\infty}\frac{1}{108}\left(-4\left(\frac{n^{2/3}}{z_n}\right)^3\frac{z_n^3}{(z_n-1)^3}-12\frac{z_n^2}{(z_n-1)^2}+27-12\left(\frac{z_{n}}{n^{2/3}}\right)^3\frac{z_n}{z_n-1}-4\left(\frac{z_{n}}{n^{2/3}}\right)^6\right)=\nonumber\\&\lim_{n\to\infty}\frac{1}{108}\left(-4\cdot 2-12+27-12\cdot 2^{-1}-4\cdot 2^{-2}\right)=0.\nonumber
\end{align}}

%\paragraph{Conflict of interest}
%We state that there is no conflict of interest.

\begin{table}[ht]
\centering
{\begin{tabular}{ |>{\bfseries}c|c|>{\bfseries}c|c|>{\bfseries}c|c|>{\bfseries}c|c|>{\bfseries}c|c|>{\bfseries}c|c| }
 \hline
   $\mathbf{n}$ & $m_n$ & $\mathbf{n}$ & $m_n$ & $\mathbf{n}$ & $m_n$ & $\mathbf{n}$ & $m_n$ & $\mathbf{n}$ & $m_n$ & $\mathbf{n}$ & $m_n$ \\
 \hline
1 & 1&36 & 174&71 & 449&106 & 782&141 & 1161&176 & 1576\\\hline
2 & 4&37 & 181&72 & 458&107 & 793&142 & 1174&177 & 1589\\\hline
3 & 5&38 & 188&73 & 467&108 & 802&143 & 1185&178 & 1602\\\hline
4 & 8&39 & 195&74 & 476&109 & 811&144 & 1196&179 & 1615\\\hline
5 & 11&40 & 202&75 & 485&110 & 822&145 & 1207&180 & 1628\\\hline
6 & 14&41 & 209&76 & 494&111 & 833&146 & 1218&181 & 1641\\\hline
7 & 17&42 & 216&77 & 503&112 & 844&147 & 1229&182 & 1654\\\hline
8 & 22&43 & 223&78 & 512&113 & 855&148 & 1240&183 & 1665\\\hline
9 & 25&44 & 230&79 & 521&114 & 866&149 & 1253&184 & 1678\\\hline
10 & 28&45 & 237&80 & 530&115 & 875&150 & 1266&185 & 1691\\\hline
11 & 33&46 & 244&81 & 539&116 & 886&151 & 1277&186 & 1704\\\hline
12 & 38&47 & 253&82 & 548&117 & 897&152 & 1288&187 & 1717\\\hline
13 & 41&48 & 260&83 & 557&118 & 908&153 & 1299&188 & 1730\\\hline
14 & 46&49 & 267&84 & 566&119 & 919&154 & 1310&189 & 1743\\\hline
15 & 51&50 & 274&85 & 575&120 & 930&155 & 1321&190 & 1756\\\hline
16 & 56&51 & 281&86 & 586&121 & 941&156 & 1334&191 & 1769\\\hline
17 & 61&52 & 290&87 & 595&122 & 952&157 & 1347&192 & 1782\\\hline
18 & 66&53 & 299&88 & 604&123 & 963&158 & 1360&193 & 1795\\\hline
19 & 71&54 & 306&89 & 613&124 & 974&159 & 1371&194 & 1808\\\hline
20 & 76&55 & 313&90 & 622&125 & 985&160 & 1382&195 & 1821\\\hline
21 & 81&56 & 320&91 & 631&126 & 996&161 & 1393&196 & 1834\\\hline
22 & 88&57 & 329&92 & 642&127 & 1007&162 & 1404&197 & 1847\\\hline
23 & 93&58 & 338&93 & 653&128 & 1018&163 & 1417&198 & 1860\\\hline
24 & 98&59 & 347&94 & 662&129 & 1029&164 & 1430&199 & 1873\\\hline
25 & 103&60 & 354&95 & 671&130 & 1040&165 & 1443&200 & 1886\\\hline
26 & 110&61 & 361&96 & 680&131 & 1051&166 & 1456&201 & 1899\\\hline
27 & 117&62 & 370&97 & 689&132 & 1062&167 & 1467&202 & 1912\\\hline
28 & 122&63 & 379&98 & 700&133 & 1073&168 & 1478&203 & 1925\\\hline
29 & 127&64 & 388&99 & 711&134 & 1084&169 & 1489&204 & 1938\\\hline
30 & 134&65 & 397&100 & 722&135 & 1095&170 & 1502&205 & 1951\\\hline
31 & 141&66 & 404&101 & 731&136 & 1106&171 & 1515&206 & 1964\\\hline
32 & 148&67 & 413&102 & 740&137 & 1117&172 & 1528&207 & 1977\\\hline
33 & 153&68 & 422&103 & 749&138 & 1128&173 & 1541&208 & 1990\\\hline
34 & 160&69 & 431&104 & 760&139 & 1139&174 & 1554&209 & 2003\\\hline
35 & 167&70 & 440&105 & 771&140 & 1150&175 & 1565&210 & 2016\\\hline
\end{tabular}}
\caption{The minimal sum of squares over partitions of $n$ with a nonnegative rank, for $1\leq n\leq 210$.}
\label{table:22}
\end{table}

\end{document}